\theoremstyle{plain}
\newtheorem{lemma}{Lemma}
\newtheorem{proposition}{Proposition}
\newtheorem{theorem}{Theorem}
\newtheorem{corollary}{Corollary}
\theoremstyle{definition}
\newtheorem{definition}{Definition}
\theoremstyle{remark}
\newtheorem{remark}{Remark}
\title {Analysis of Convergence for the Newton Method in DC Microgrids}
\author{\IEEEauthorblockN{Alejandro Garc\'es,} \textit{Senior Member IEEE}

\thanks{Preprint submitted to IEEE Transactions on Power Systems}}
\begin{document}
\maketitle
\begin{abstract}
The power flow is a non-linear problem that requires a Newton's method to be solved in dc microgrids with constant power terminals.  This paper presents sufficient conditions for the quadratic convergence of the Newton's method in this type of grids. The classic Newton method as well as an approximated Newton Method are analyzed in both master-slave and island operation with droop controls. Requirements for the convergence as well as for the existence  and uniqueness of the solution starting from voltages close to 1pu are presented.  Computational results complement this theoretical analysis.  
\end{abstract}

\begin{IEEEkeywords}
Dc grids, load flow, newton-raphson, micro-grids, decoupled load flow
\end{IEEEkeywords}

\section{Introduction}

\IEEEPARstart{M}{icrogrids} promise to play a fundamental role in the future of the smart grid concept \cite{WileyReviewMicroGrids,josef}.  In particular, dc microgrids are gaining increasing interest due to their advantages in terms of efficiency, reliability and controlability. A dc microgrid allows high efficiency and simplified control due to absence of reactive power or frequency controls; it allows high reliability due to its capability of island operation; it permits simple integration since many generation and storage technologies are already dc (i.g solar photovoltaic, batteries)\cite{Estandarization,Qualitative_topology_effects}.  In addition, most of the home appliances could be adapted to operate in dc \cite{dc_come_home}.

In a typical dc microgid, power electronics converters can be operated as constant current or as constant power. In the later case, the model of the grid becomes non-linear and requires a power flow algorithm for stationary state analysis \cite{adhoc}.  The problem is non-linear/non-convex and requires to be solved by using numerical algorithms \cite{review_dcmicrogirds}.  Of course, convergence is not always guaranteed in these type of algorithms due to the non-linear nature of the problem.  An algorithm could even diverge or converge to a non-realistic solution. Consequently, it is necessary to establish the exact conditions in which a power flow algorithm converges to a unique and realistic solution. 

On the other hand, there are two main type of controls in dc microgrids namely, master-slave and droop control.  In master-slave control, a converter fixes the voltage of the entire grid; this is the most usual operation for grid connected microgrids.  For island-mode operation, the power in the grid is modified by a droop control in order to achieve an stable equilibrium point.  Both operation modes require a load flow algorithm \cite{dc_powerflow_energyconversion}.

In this paper we analyze the convergence of the Newton's method as well as an approximated Newton's method for the power flow in dc microgrids.  This analysis is important for two main reasons: first, the power flow algorithm requires to be executed many times in both, operation and planning of microgrids.  In operation, a guaranteed convergence is a desired feature in the context of the smart-grids where human supervision is less usual.  In planning, the power flow could be part of other algorithms, specially in heuristic optimization problems \cite{genetico}. Hence, quadratic convergence and uniqueness of the solution are key conditions.  Second, the power flow gives the equilibrium point of the dynamical model of the grid.  Finding the equilibrium is the fist step in most of the studies related to dynamics and stability of microgrids \cite{koguiman}.  

Convergence of the Gauss-Seidel  method was recently analyzed by the author  for master-slave operation \cite{GARCES2017149}. Here, we extend this result by defining exact conditions for the convergence of the Newton's method in both master-slave and island operation. Moreover, the convergence of an approximated Newton's method is analyzed. This method is similar to the fast decoupled load flow for ac grids. We use the Kantorovitch's theory for the former and the contraction mapping theory for the later.  Despite being a classic results in real analysis, these theories have not been used before to analyze these problems.  As expected, the Newton method has quadratic convergence although in a small basin of attraction whereas the approximated Newton's method has a guaranteed linear convergence.

Notice there is a linearization in ac grids which is also named dc-power flow.  That name comes from an analogy between angles in ac grids and voltages in linear dc grids \cite{not_related1}. This paper is not related to these type of analogies or linearizations.  We are interested in grids that are actually dc and non-linear due to the presence of power converters.

The rest of the paper is organized as follows: Section II describes the model of the grid in both, master-slave and island operation. The Newton's method is analized in Section II, followed by the analysis of the approximated Newton's method in Section III.  After that, numerical simulations are performed followed by conclusions and references.  

\section{Grid Model}

\subsection{Master-slave operation}

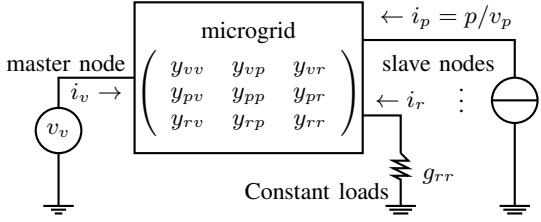
\begin{figure}[tb]
\centering
\small
\begin{tikzpicture}[x=1mm,y=1mm]

\draw[thick] (0,0) rectangle +(30,20);
\node[] at (15,16) {microgrid};
\node[] at (15,8) {$\left(\begin{array}{ccc}
y_{vv} & y_{vp} & y_{vr} \\
y_{pv} & y_{pp} & y_{pr} \\
y_{rv} & y_{rp} & y_{rr} \\
\end{array}\right)
$};

\draw[thick] (-10,-7) -- (-10,0);
\draw[thick] (-12,-7) -- (-8,-7);
\draw[thick] (-11,-7.5) -- (-9,-7.5);
\draw[thick] (-10.5,-8) -- (-9.5,-8);
\draw[thick] (-10,3) circle (3);
\draw[thick] (-10,6) |- (0,10);
\node at (-10,3) {$v_{v}$};
\node at (-5,8) {$i_{v}\rightarrow$};
\node at (-9,12) {master node};

\draw[thick] (30,15) -| (50,10);
\draw[thick] (50,7) circle (3);
\draw[thick] (47,7) -- +(6,0);
\draw[thick] (50,4) -- (50,-7);
\draw[thick] (48,-7) -- (52,-7);
\draw[thick] (48.5,-7.5) -- (51.5,-7.5);
\draw[thick] (49,-8) -- (51,-8);
\node at (41,18) {$\leftarrow i_{p}=p/v_{p}$};
\node at (43,8){$\vdots$};
\node at (40,12) {slave nodes};

\draw[thick] (30,5) -| (35,0);
\draw[thick] (35,0) -- +(1,-1) -- +(-1,-1) -- +(1,-2) -- +(-1,-2) -- +(1,-3) -- +(-1,-3) -- +(0,-3.5) -- +(0,-7);
\draw[thick] (33,-7) -- (37,-7);
\draw[thick] (33.5,-7.5) -- (36.5,-7.5);
\draw[thick] (34,-8) -- (36,-8);
\node at (24,-5) {Constant loads};
\node at (40,-3) {$g_{rr}$};
\node at (35,7) {$\leftarrow i_{r}$};
\end{tikzpicture}
\caption{Schematic representation of a DC microgrid with three types of terminals: constant voltage $v_v$, constant power $i_p$ and constant resistance $i_r$}
\label{fig:esquema_microgrid}
\end{figure}

Let us consider a dc microgrid with master-slave operation represented by its nodal admittance matrix $y$ and three types of terminals namely: constant voltage (master node), constant admittance and constant power as depicted in Fig \ref{fig:esquema_microgrid}; each of these terminals are represented by sub-indices $v,r$ and $p$ respectively.   The model of the grid is given by 

\begin{equation*}
\left(\begin{array}{c}
i_v \\
i_p \\
i_r \\
\end{array}\right)
 = 
\left(\begin{array}{ccc}
y_{vv} & y_{vp} & y_{vr} \\
y_{pv} & y_{pp} & y_{pr} \\
y_{rv} & y_{rp} & y_{rr} \\
\end{array}\right)
\left(\begin{array}{c}
v_v \\
v_p \\
v_r
\end{array}\right)
\end{equation*}

Constant admittance terminals can be represented by a diagonal matrix $g_{rr}$ such that $i_r=-g_{rr}v_r$ (the sign comes from the direction of the current).  Therefore, we can collect the terms of the admitance matrix for a Kron's reduction as follows

\begin{equation*}
\left(\begin{array}{c}
i_{v} \\
i_{p} \\\hline
0 \\
\end{array}\right)
 = 
\left(\begin{array}{cc|c}
y_{vv} & y_{vp} & y_{vr} \\
y_{pv} & y_{pp} & y_{pr} \\\hline
y_{rv} & y_{rp} & y_{rr}+g_{rr} \\
\end{array}\right)
\left(\begin{array}{c}
v_v \\
v_p \\\hline
v_r
\end{array}\right)
\end{equation*}

Let us define new matrices $Y_{pv}$ and $Y_{pp}$ as follows

\begin{eqnarray*}
Y_{pv} = y_{pv} - y_{pr}(y_{rr}+g_{rr})^{-1}y_{rv} \\
Y_{pp} = y_{pp} - y_{pr}(y_{rr}+g_{rr})^{-1}y_{rp     } 
\end{eqnarray*}

\noindent then

\begin{equation*}
i_p = Y_{pv} v_v + Y_{pp}v_p
\end{equation*}

The power flow consists in finding the state variables (or independent variables) which in this case is the vector $v_p$, since $v_v$ is already known and the other quantities, such as the power flows and power losses, can be easily computed from these voltages. In the following, we will drop the sub index of $v_p$ in order to simplify the notation.   

On the other hand, constant power terminals introduce a non-linear vector function $G:\mathbb{R}^n\times\mathbb{R}^n\rightarrow\mathbb{R}^n$ that represents the currents $i_p$ as function of their voltages  $v_p$ and the controlled power $P$ as follows:

\begin{equation*}
	i_{p}=G(v) = diag(v)^{-1} P
\end{equation*}

in other words, each current is given by $i = p/v$.  Since $P$ and $v_{v}$ are known, the resulting non-linear system is the following

\begin{equation}
	F(v) = G(v) - Y_{pv} v_v - Y_{pp} \cdot v = 0
	\label{eq:loadflow}
\end{equation}

This is a system of non-linear equations that requires to be solved by a Newton's method.

\subsection{Island operation}

In island operation the master terminal is disconnected meaning that $i_v=0$; hence, the node $v$ can be eliminated by a new Kron's reduction \cite{kron} as follows

\begin{equation*}
	Y_{s} = Y_{pp} - Y_{pv}Y_{vv}^{-1}Y_{vp}
\end{equation*}

The model of the grid is now given by the following expression
\begin{equation*}
  i_p = S(v) = Y_{s} v
\end{equation*}

\noindent where $S$ replaces the function $G$ in order to include the effect of the droop as follows:

\begin{equation*}
	S(v) = diag(v)^{-1} (P-C\cdot(v-v_n))
\end{equation*}

\noindent in this case, $v_n$ is a reference voltage given by the secondary control and $C$ is a positive-defined diagonal matrix.    The resulting non-linear algebraic system is given by (\ref{eq:droop})

\begin{equation}
	F(v) = S(v) - Y_{s}v = 0
	\label{eq:droop}
\end{equation}

We are interested not only in solving numerically the problems $F(v)=0$ in (\ref{eq:loadflow}) and (\ref{eq:droop}), but also in analyzing the existence of the solution and the convergence of the algorithms.

\section{Newton method}

Let $F:\mathbb{R}^n \rightarrow \mathbb{R}^n$ be a differentiable vector function.  Our objective is to find a vector $v\in\mathbb{R}^n$ such that $F(v)=0$. For this, we use the Newton's method which consists in approaching $v$ from an initial point $v_0$ by applying iteratively the following sequence

\begin{equation}
  v_{k+1}=v_{k}-[DF(v_k)]^{-1}F(v_k)
	\label{eq:newton_step}
\end{equation}

where $DF(v_k)$ is the Jacobian matrix of $F$ evaluated in the point $v_k$ (subindex $k$ indicates the iteration). Convergence of the method depends on intrinsic characteristics of the function $F(v)$ which guarantees convergence.

Let us start our analysis by presenting the following classic statement \cite{hubbard}:

\begin{theorem}{(Kantorovitch's theorem in $\mathbb{R}^n$)} Let $v_0$ be a point in $\mathbb{R}^n$ and $F:\mathcal{B}_0\rightarrow \mathbb{R}^n$ a differentiable map with its derivative $[DF(v)]$ invertible.  Define

\begin{eqnarray*}
	\Delta v_0 = [DF(v_0)]^{-1} F(v_0) \\
	v_1 = v_0+\Delta v_0 \\
	\mathcal{B}_0 = \left\{ v: \left\| v-v_1 \right\| \leq \left\|\Delta v_0\right\| \right\}
\end{eqnarray*}

\noindent if the derivative $[DF(v)]$ satisfies the Lipschitz condition 

\begin{equation}
\left\| DF(v)-DF(u)\right\|\leq K\left\|v-u\right\|, \forall v,u \in \mathcal{B}
\label{eq:lip}
\end{equation}

\noindent and if the inequality

\begin{equation}
  h= \left\|F(v_0)\right\| \left\|DF(v_0)^{-1}\right\|^2 K \leq \frac{1}{2}
	\label{eq:condicion_h}
\end{equation}

\noindent is satisfied, then the equation $F(v)=0$ has a unique solution in $\mathcal{B}_0$ and Newton's methods converges to it with Newton's step (\ref{eq:newton_step}) and initial condition $v_0$.  Moreover, if $h<\frac{1}{2}$ the order of convergence is at least quadratic.
\end{theorem}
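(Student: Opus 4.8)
The plan is to establish the theorem by the classical Newton--Kantorovitch majorant technique, comparing the vector Newton iteration (\ref{eq:newton_step}) with a scalar Newton iteration applied to a judiciously chosen quadratic and transferring the convergence of the scalar sequence to the vector one. First I would introduce the shorthand $\beta_0=\left\|DF(v_0)^{-1}\right\|$ and $\eta_0=\left\|\Delta v_0\right\|$, observe that $\beta_0\eta_0 K\leq h\leq\tfrac12$ because $\eta_0\leq\beta_0\left\|F(v_0)\right\|$, and define the scalar majorant
\begin{equation*}
g(t)=\frac{K}{2}\,t^{2}-\frac{1}{\beta_0}\,t+\frac{\eta_0}{\beta_0},
\end{equation*}
whose discriminant is nonnegative exactly when $\beta_0\eta_0 K\leq\tfrac12$; denote by $t^{*}\leq t^{**}$ its two roots. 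The scalar Newton iteration $t_0=0$, $t_{k+1}=t_k-g(t_k)/g'(t_k)$ is then well defined (since $g'(t_k)<0$), increasing, and converges to $t^{*}$, and one checks $t^{*}\leq 2\eta_0$.

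The core of the proof is a simultaneous induction showing, for every $k$, that (i) $DF(v_k)$ is invertible with $\left\|DF(v_k)^{-1}\right\|\leq -1/g'(t_k)$, (ii) $\left\|\Delta v_k\right\|\leq t_{k+1}-t_k$, and (iii) $\left\|v_k-v_0\right\|\leq t_k$. The inductive step rests on two standard estimates. For the residual, writing
\begin{equation*}
F(v_{k+1})=F(v_{k+1})-F(v_k)-DF(v_k)\Delta v_k=\int_0^1\bigl(DF(v_k+s\,\Delta v_k)-DF(v_k)\bigr)\Delta v_k\,ds,
\end{equation*}
the Lipschitz hypothesis (\ref{eq:lip}) yields $\left\|F(v_{k+1})\right\|\leq\tfrac{K}{2}\left\|\Delta v_k\right\|^{2}$. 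For invertibility, since $\left\|DF(v_{k+1})-DF(v_0)\right\|\leq K\left\|v_{k+1}-v_0\right\|\leq K t_{k+1}<1/\beta_0$, a Neumann--series (Banach lemma) argument gives that $DF(v_{k+1})$ is invertible with $\left\|DF(v_{k+1})^{-1}\right\|\leq\beta_0/(1-\beta_0 K t_{k+1})=-1/g'(t_{k+1})$. Multiplying the two bounds, $\left\|\Delta v_{k+1}\right\|\leq\left\|DF(v_{k+1})^{-1}\right\|\,\left\|F(v_{k+1})\right\|\leq\tfrac{K}{2}\left\|\Delta v_k\right\|^{2}/\bigl(-g'(t_{k+1})\bigr)$, which is precisely the scalar identity $t_{k+2}-t_{k+1}=\tfrac{K}{2}(t_{k+1}-t_k)^{2}/(-g'(t_{k+1}))$ with the vector quantities dominated term by term; this closes the induction, and in particular $\mathcal{B}_0$ is mapped into itself.

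With the induction available, $\sum_k\left\|\Delta v_k\right\|\leq\sum_k(t_{k+1}-t_k)=t^{*}<\infty$, so $(v_k)$ is a Cauchy sequence converging to some $v^{*}$ with $\left\|v^{*}-v_0\right\|\leq t^{*}$; since the residuals tend to zero and $F$ is continuous, $F(v^{*})=0$, and $\left\|v^{*}-v_1\right\|\leq t^{*}-t_1=t^{*}-\eta_0\leq\eta_0=\left\|\Delta v_0\right\|$ shows $v^{*}\in\mathcal{B}_0$. For uniqueness, if $w\in\mathcal{B}_0$ also satisfies $F(w)=0$, then $0=\int_0^1 DF\bigl(v^{*}+s(w-v^{*})\bigr)(w-v^{*})\,ds$; since $\mathcal{B}_0$ is convex and every point of $\mathcal{B}_0$ lies within $2\eta_0$ of $v_0$, the Lipschitz bound controls the distance of this integrated Jacobian from $DF(v_0)$ and shows it is invertible, forcing $w=v^{*}$. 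Finally, when $h<\tfrac12$ the discriminant is strictly positive, $g'(t^{*})<0$, and the scalar sequence obeys $t^{*}-t_{k+1}\leq c\,(t^{*}-t_k)^{2}$ for a constant $c$; feeding this through (ii)--(iii) gives $\left\|v^{*}-v_{k+1}\right\|=O\!\left(\left\|v^{*}-v_k\right\|^{2}\right)$, i.e.\ convergence of at least quadratic order.

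The step I expect to be the main obstacle is making the simultaneous induction fully rigorous --- especially verifying that the invertibility constant produced by the Banach lemma lines up exactly with $-1/g'(t_k)$ and that the monotone scalar sequence never overshoots $t^{*}$ --- together with the uniqueness argument, where one must be careful that the entire segment joining $v^{*}$ to $w$ remains in a region where $DF$ is quantitatively controlled; everything else is bookkeeping with the Lipschitz estimate and the integral form of the mean value inequality.
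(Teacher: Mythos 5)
Your proof is essentially correct, but it follows a genuinely different route from the paper's. The paper's Appendix A is a short recursive-contraction sketch: it bounds $\left\|DF(v_1)^{-1}\right\|\leq 2\left\|DF(v_0)^{-1}\right\|$ via the Banach lemma applied to $A=I-DF(v_0)^{-1}DF(v_1)$, derives the residual estimate $\left\|F(v_{k+1})\right\|\leq \tfrac{K}{2}\left\|\Delta v_k\right\|^2$ from the integral mean-value identity (the one estimate you share), and concludes $\left\|\Delta v_1\right\|\leq \tfrac12\left\|\Delta v_0\right\|$, so the Newton steps halve and the balls $\mathcal{B}_k$ nest; uniqueness and the precise quadratic order are asserted rather than argued. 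You instead run the classical Kantorovitch majorant argument: a scalar quadratic $g(t)$ whose Newton iterates dominate $\left\|\Delta v_k\right\|$ and $\left\|v_k-v_0\right\|$ term by term, with the Banach-lemma constant matched exactly to $-1/g'(t_k)$. What your approach buys is a genuinely complete proof: the induction simultaneously yields convergence, containment of the limit in $\mathcal{B}_0$ (via $t^*-\eta_0\leq\eta_0$), an explicit uniqueness argument, and the quadratic rate when $h<\tfrac12$, none of which the paper's sketch actually establishes; the price is more bookkeeping than the paper's three-line step-contraction estimate, which suffices to convey the mechanism pictured in Fig.~\ref{fig:bola_iteracion}. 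One caveat in your uniqueness step: bounding the averaged Jacobian by $K\cdot 2\eta_0$ and invoking the Banach lemma needs $2K\beta_0\eta_0<1$, which holds when $h<\tfrac12$ (or when $\eta_0<\beta_0\left\|F(v_0)\right\|$ strictly) but degenerates to exactly $1$ in the borderline case $K\beta_0\eta_0=\tfrac12$; there you must fall back on the majorant comparison (uniqueness inside the closed ball of radius $t^*=t^{**}$) or a limiting argument rather than the naive Neumann-series bound. Also note the paper states $v_1=v_0+\Delta v_0$ with $\Delta v_0=[DF(v_0)]^{-1}F(v_0)$, a sign slip relative to the Newton step (\ref{eq:newton_step}); your use of the standard convention matches the intended statement.
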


\begin{proof} See Appendix A \end{proof}

\begin{remark}

The theorem guarantees that in each Newton's iteration, the value of $v_{k+1}$ lies in a ball $\mathcal{B}_k\subset\mathcal{B}_{0}$ which contracts as depicted in Fig \ref{fig:bola_iteracion}.  Notice that $\mathcal{B}_0$ is centered at the point $v_1$.

\begin{figure}
\centering
\footnotesize
\begin{tikzpicture}[x=1mm,y=1mm]
\draw [-latex, thick]  (140:20) -- (0,0); 
\draw [-latex,thick]  (0,0) -- (30:10); 
\draw [-latex,thick]  (30:10) -- +(100:5); 
\draw[dashed] (0,0) circle (20) (30:10) circle (10) +(100:5) circle (5);
\node at (140:23) {$v_0$};
\node at (0,-2) {$v_1$};
\node at (15:10) {$v_2$};
\node at (53:14) {$v_3$};
\node at (0,-17) {$\mathcal{B}_0$};
\node at (10,-3) {$\mathcal{B}_1$};
\node at (6,8) {$\mathcal{B}_2$};
\node[rotate=-40] at (-10,10) {$\Delta v_0$};
\end{tikzpicture}
\caption{Schematic representation of the basin of attraction of each Newton's iteration}
\label{fig:bola_iteracion}

\end{figure}
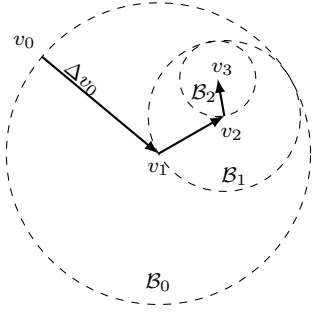	
\end{remark}

In order to use the theorem above, we consider the following assumptions easily satisfied by any practical dc-microgrid

\begin{description}
	\item[A1] The graph is connected and consequently $Y_{pp}$ is non-singular
	\item[A2] The system is not in short circuit
	\item[A3] The system is represented in per-unit.
\end{description}

In the following, we use the supremum norm for vectors $\left\|x\right\| =max\left\{|x_1|,|x_2|,\dots\right\}$ and a submutiplicative norm induced by the supremum norm for matrices

\begin{equation*}
  \left\| A \right\| = \underset{\left\|x\right\|\neq 0}{sup} \frac{\left\|Ax\right\|}{\left\|x\right\|}
\end{equation*}

Let us define some constants that will be used later: 

\begin{equation*}
\begin{array}{ll}
\rho   = \left\|Y_{pp}^{-1}\right\|              &
\alpha = \left\| P\right\|                       \\
\xi    = \left\| Y_{pv}v_{v}\right\|   &
\mu    = \left\| Y_{pv}v_v+Y_{pp}\hat{e}_N \right\|  
\end{array}
\end{equation*}

Notice that $Y_{pp}^{-1}$ is the nodal impedance matrix, then $\rho$ represents the maximum Thevening impedance of the grid while $\alpha$ is the maximum power; $\xi$ is maximum current $i_{p}$ if the constant power terminals are all in short circuit (i.e $v=0$) and finally, $\mu$ is the current for flat start initialization of the Newton method (i.e $ v=\hat{e}_{N}$ where $e_N$ is a $N\times 1$ vector of ones).

\begin{lemma}
The derivative (jacobian matrix) of $F$ in (\ref{eq:loadflow}) satisfies the Lipchitz condition for all points $v$ in an open ball $\mathcal{B} = \left\{v: \left\|v-e_N\right\|<\delta<1\right\}$
\end{lemma}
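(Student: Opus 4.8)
The plan is to compute $DF$ explicitly, notice that its only $v$-dependent part is diagonal, and then bound the variation of that diagonal part by hand. Differentiating $F$ in (\ref{eq:loadflow}) with $G(v)=diag(v)^{-1}P$, the term $-Y_{pv}v_v$ is constant and $-Y_{pp}v$ is linear, so $DF(v) = -diag\!\left(P_1/v_1^2,\dots,P_n/v_n^2\right) - Y_{pp}$. Consequently, for any $v,u\in\mathcal{B}$ the constant and linear contributions cancel in the difference, and $DF(v)-DF(u)$ is the diagonal matrix whose $i$-th entry is $P_i\bigl(1/u_i^2 - 1/v_i^2\bigr)$.

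Since the matrix norm in use is the one induced by the supremum norm, the norm of a diagonal matrix equals the largest absolute value of its diagonal entries; hence $\|DF(v)-DF(u)\| = \max_i |P_i|\,\dfrac{|u_i^2-v_i^2|}{u_i^2 v_i^2}$. I would then factor $u_i^2-v_i^2=(u_i-v_i)(u_i+v_i)$, use $|u_i-v_i|\le\|u-v\|$, and exploit the ball condition $\|v-e_N\|<\delta<1$, which forces $1-\delta<v_i<1+\delta$ and likewise for $u$. This gives $|u_i+v_i|\le 2(1+\delta)$ and $u_i^2 v_i^2\ge(1-\delta)^4$, so $\|DF(v)-DF(u)\| \le \dfrac{2\alpha(1+\delta)}{(1-\delta)^4}\,\|u-v\|$, i.e. the Lipschitz condition (\ref{eq:lip}) holds with $K=\dfrac{2\alpha(1+\delta)}{(1-\delta)^4}$, where $\alpha=\|P\|$; assumptions A2 and A3 ensure $\alpha<\infty$, so $K$ is finite.

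The only delicate point — and the reason the radius must satisfy $\delta<1$ rather than being arbitrary — is that $G$, and therefore $DF$, is defined only where every coordinate $v_i$ is nonzero; the bound $v_i>1-\delta>0$ valid on $\mathcal{B}$ is precisely what keeps $F$ differentiable there and the denominators $u_i^2 v_i^2$ bounded away from zero. I expect no further obstacle: once the diagonal structure of $DF(v)-DF(u)$ is recognized, the estimate reduces to a short elementary calculation, and the constant $K$ so obtained is exactly what will later be plugged into the Kantorovitch inequality (\ref{eq:condicion_h}).
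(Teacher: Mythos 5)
Your proof is correct and follows essentially the same route as the paper: compute $DF(v) = -diag(P)\,diag(v)^{-2}-Y_{pp}$, observe that the constant and linear parts cancel so the difference $DF(v)-DF(u)$ is diagonal, and bound its largest entry using $1-\delta < v_i,u_i$ on the ball. The only divergence is in the elementary scalar estimate: you factor $u_i^2-v_i^2=(u_i-v_i)(u_i+v_i)$ and bound numerator and denominator separately, obtaining $K=\frac{2\alpha(1+\delta)}{(1-\delta)^4}$, whereas the paper (implicitly via a mean-value bound on $x\mapsto 1/x^2$, whose derivative is $-2/x^3$) gets the sharper constant $K=\frac{2\alpha}{(1-\delta)^3}$. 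Both constants establish the Lipschitz condition, so the lemma is proved either way; the practical difference is only downstream, since $K$ is substituted into the Kantorovitch quantity $h$ in (\ref{eq:condicion_h}) to produce the explicit convergence criterion (\ref{eq:nwconv}) of Proposition 1, and your larger $K$ would yield a slightly more conservative sufficient condition than the one stated in the paper. Your remark on why $\delta<1$ is needed (keeping $v_i$ bounded away from zero so that $G$ and $DF$ are defined and the denominators are controlled) is accurate and, if anything, is made more explicit than in the paper's own proof.
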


\begin{proof}
The derivative of $F$ is given by
\begin{equation*}
DF(v) = -diag(P)\cdot diag(v)^{-2}-Y_{pp}
\label{eq:jacobiano}
\end{equation*}

then, select two different points $v,u\in\mathcal{B}$ and calculate

\begin{equation*}
	\left\| DF(v)-DF(u)\right\| \leq 
	\left\| P\right\|
	\left\| diag(v)^{-2}-diag(u)^{-2} \right\|	
\end{equation*}

\begin{eqnarray*}
	\left\| DF(v)-DF(u)\right\| \leq
	\alpha \cdot \underset{i}{\text{max}} \left\{ \frac{1}{v_i^2}-\frac{1}{u_i^2} \right\} \\
	\leq \frac{2\alpha}{(1-\delta)^3} \cdot \left\|v-u \right\|
\end{eqnarray*}

then the jacobian matrix satisfies the Lipchitz condition with 

\begin{equation*}
K = \frac{2\alpha}{(1-\delta)^3}
\label{eq:m}
\end{equation*}
\end{proof}

\begin{lemma}
$DF(v)$ is invertible and its inverse is bounded for $v_0 = \hat{e}_{N}$
\end{lemma}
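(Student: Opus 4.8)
The plan is to evaluate the Jacobian at the flat-start point and exploit the fact that, at $v_0=\hat{e}_N$, the factor $diag(v_0)^{-2}$ reduces to the identity. From the expression for $DF(v)$ obtained in the proof of Lemma~1 we get
\begin{equation*}
DF(\hat{e}_N) = -\,diag(P) - Y_{pp}.
\end{equation*}
I would then factor out $Y_{pp}$, which is non-singular by assumption~A1, writing
\begin{equation*}
DF(\hat{e}_N) = -\,Y_{pp}\bigl(I + Y_{pp}^{-1}\,diag(P)\bigr),
\end{equation*}
so that the problem is reduced to showing that the bracketed matrix is invertible with a controlled inverse.

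The key step is to bound $\left\|Y_{pp}^{-1}\,diag(P)\right\|$. By submultiplicativity of the induced norm and the fact that, for the supremum norm, the operator norm of a diagonal matrix equals its largest entry in absolute value, we have $\left\|diag(P)\right\| = \left\|P\right\| = \alpha$, hence $\left\|Y_{pp}^{-1}\,diag(P)\right\| \le \rho\alpha$. Under the physically meaningful condition that the loading is moderate relative to the grid impedance, namely $\rho\alpha<1$ — which is what the per-unit representation~A3 together with the no-short-circuit assumption~A2 are meant to guarantee — the Banach lemma (Neumann series) yields that $I + Y_{pp}^{-1}\,diag(P)$ is invertible, hence so is $DF(\hat{e}_N)$, with
\begin{equation*}
\bigl\|\bigl(I + Y_{pp}^{-1}\,diag(P)\bigr)^{-1}\bigr\| \le \frac{1}{1-\rho\alpha}.
\end{equation*}
Combining with $\left\|Y_{pp}^{-1}\right\|=\rho$ and submultiplicativity once more gives the explicit bound
\begin{equation*}
\bigl\|DF(\hat{e}_N)^{-1}\bigr\| \le \frac{\rho}{1-\rho\alpha},
\end{equation*}
which establishes both invertibility and boundedness of the inverse at $v_0=\hat{e}_N$.

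The main obstacle here is not the algebra but the justification of the inequality $\rho\alpha<1$: one must argue that in a per-unit model (A3) of a grid that is not in short circuit (A2), the product of the maximal Thévenin impedance $\rho$ and the maximal power $\alpha$ is genuinely below one. I would therefore state $\rho\alpha<1$ explicitly, either as the standing hypothesis of the lemma or as an elementary consequence of A1--A3, since without it the Neumann-series argument, and in fact invertibility of $DF(\hat{e}_N)$ itself, may fail. The quantitative bound $\rho/(1-\rho\alpha)$ will moreover feed directly into the computation of the Kantorovitch constant $h$ in~(\ref{eq:condicion_h}).
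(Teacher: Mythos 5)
Your proof is correct and follows essentially the same route as the paper: factor $DF(\hat{e}_N)=-\,Y_{pp}\bigl(I+Y_{pp}^{-1}diag(P)\bigr)$, invoke assumption A1 for $Y_{pp}^{-1}$, and apply the Banach lemma under $\alpha\rho<1$ to obtain $\left\|DF(\hat{e}_N)^{-1}\right\|\leq \rho/(1-\alpha\rho)$, which is exactly the paper's bound on $\Gamma_0$. Your remark that $\alpha\rho<1$ deserves to be stated explicitly is fair — the paper simply attributes it to assumption A2 without further argument — but this does not change the substance of the proof.
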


\begin{proof}
Define $\Gamma_0 = DF(v_0)^{-1}$ for $V_0 = \hat{e}_{N}$ (i.e flat start according to A3) and consider its inverse 
	
	\begin{equation*}
		\Gamma_0 = [DF(v_0)]^{-1} = (-diag(P)-Y_{pp})^{-1}
	\end{equation*}
	
Now, rearrange the equation as follows  
	
	\begin{equation*}
		\Gamma_0 = -Y_{pp}^{-1} (Y_{pp}^{-1}diag(P) + 1_N)^{-1}
	\end{equation*}

where $1_N$ is the $N\times N$ identity matrix.	 Notice the inverse of $Y_{pp}$ exists due to assumption A1.  In addition $\left\| Y_{pp}^{-1}diag(P)\right\|<1$ due to assumption A2; therefore, by using the Banach Lemma we can conclude that the inverse of $Y_{pp}^{-1}diag(P) + 1_N$ exist and is bounded as follows
	
	\begin{equation*}
		\left\| (Y_{pp}diag(P) + 1_N)^{-1} \right\| \leq  \frac{1}{1-\left\| Y_{pp}^{-1}diag(P)\right\|}
	\end{equation*}

\noindent by using our previously defined constants we have that
	
	\begin{equation*}
		\left\| \Gamma_0 \right\| \leq \frac{\rho}{1-\alpha\rho}
	\end{equation*}

\noindent which completes the proof.
\end{proof}

Now we can present our first result about the convergence of the Newton method in dc microgrids.
\begin{proposition}[Convergence of the Newton's Method in master-slave control]
	Under the assumptions (A1 to A3) the operation point of a dc-grid is unique and can be calculated by the Newton-method with at least quadratic convergence starting from $v_0=\hat{e}_{N}$ if
	
	\begin{equation}
	\frac{\alpha\rho^2(1-\alpha\rho)(\alpha+\mu)}{(1-2\alpha\rho-\mu\rho)^3} < \frac{1}{4}
	\label{eq:nwconv}
	\end{equation}
	
	\noindent the solutions lies in the ball $\mathcal{B}_0=\left\{ v: \left\|v-\hat{e}_N\right\|< \delta \right\}$ with
	
	\begin{equation}
		 \frac{(\alpha+\mu)\rho}{(1-\alpha\rho)} \leq \delta < 1
		\label{eq:conddeltas}
	\end{equation}
	
\end{proposition}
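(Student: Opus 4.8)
The plan is to invoke the Kantorovitch theorem stated above directly on the map $F$ of (\ref{eq:loadflow}), taking as initial point the flat start $v_0=\hat{e}_N$ permitted by assumption A3. That theorem needs three quantities: a bound on the residual $\|F(v_0)\|$, a bound on $\|DF(v_0)^{-1}\|$, and the Lipschitz constant $K$ of $DF$ on the relevant ball. The latter two are already available --- Lemma 2 gives $\|DF(v_0)^{-1}\|\leq \rho/(1-\alpha\rho)$, and Lemma 1 gives $K=2\alpha/(1-\delta)^3$ on the ball $\left\{v:\|v-\hat{e}_N\|<\delta<1\right\}$ --- so the actual work is to compute $\|F(v_0)\|$, to select an admissible radius $\delta$, and then to verify that the resulting constant $h$ of (\ref{eq:condicion_h}) satisfies $h\leq\tfrac12$.

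First I would evaluate the residual at the flat start. Since $G(\hat{e}_N)=diag(\hat{e}_N)^{-1}P=P$, equation (\ref{eq:loadflow}) gives $F(v_0)=P-(Y_{pv}v_v+Y_{pp}\hat{e}_N)$, so by the triangle inequality and the definitions of $\alpha$ and $\mu$ one obtains $\|F(v_0)\|\leq\alpha+\mu$. Feeding this into Lemma 2 bounds the first Newton step, $\|\Delta v_0\|=\|DF(v_0)^{-1}F(v_0)\|\leq(\alpha+\mu)\rho/(1-\alpha\rho)$. The Kantorovitch hypothesis demands the Lipschitz estimate (\ref{eq:lip}) on the ball of radius $\|\Delta v_0\|$ centered at $v_1=v_0+\Delta v_0$; because $\|v_1-\hat{e}_N\|=\|\Delta v_0\|$, that ball lives inside the domain of Lemma 1 as soon as $\delta\geq(\alpha+\mu)\rho/(1-\alpha\rho)$, which is the left half of (\ref{eq:conddeltas}); the requirement $\delta<1$ then makes the admissible interval non-empty and is equivalent to $1-2\alpha\rho-\mu\rho>0$.

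Taking $\delta$ at its lower bound gives $1-\delta=(1-2\alpha\rho-\mu\rho)/(1-\alpha\rho)$, hence $K=2\alpha(1-\alpha\rho)^3/(1-2\alpha\rho-\mu\rho)^3$, and substituting the three bounds into $h=\|F(v_0)\|\,\|DF(v_0)^{-1}\|^2 K$ collapses the expression to
\begin{equation*}
h\leq\frac{2\alpha\rho^{2}(\alpha+\mu)(1-\alpha\rho)}{(1-2\alpha\rho-\mu\rho)^{3}},
\end{equation*}
so $h<\tfrac12$ is exactly condition (\ref{eq:nwconv}). The Kantorovitch theorem then yields a zero of $F$, its uniqueness in the ball, convergence of the iteration (\ref{eq:newton_step}) from $v_0=\hat{e}_N$, and, since $h<\tfrac12$, at least quadratic order; the localization (\ref{eq:conddeltas}) of the solution about the flat start comes from the radius of that ball. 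I expect the main obstacle to be the bookkeeping between the two balls involved: the Kantorovitch ball centered at $v_1$ on which the Lipschitz hypothesis must hold, versus the ball centered at $\hat{e}_N$ on which Lemma 1 was proved. It is precisely this containment that forces the constraint (\ref{eq:conddeltas}) on $\delta$ and, through it, the positivity of $1-2\alpha\rho-\mu\rho$ that keeps (\ref{eq:nwconv}) meaningful; the rest is routine substitution.
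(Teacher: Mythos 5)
Your proposal is correct and follows essentially the same route as the paper: apply Kantorovitch's theorem at the flat start with the Lipschitz constant from Lemma 1, the bound $\left\|\Gamma_0\right\|\leq\rho/(1-\alpha\rho)$ from Lemma 2, and the residual estimate $\left\|F(\hat{e}_N)\right\|\leq\alpha+\mu$, then substitute into $h\leq\tfrac12$ to recover (\ref{eq:nwconv}) and (\ref{eq:conddeltas}). In fact you carry out explicitly the algebra (the identity $1-\delta=(1-2\alpha\rho-\mu\rho)/(1-\alpha\rho)$ and the collapse of $h$) that the paper's proof declares ``immediate,'' so your write-up is a more detailed version of the same argument.
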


\begin{proof}
The proof of this proposition is immediate by direct use of Theorem 1 with the Lipschitz condition given by Lemma 1 and $\left\|DF(v_0)^{-1}\right\| = \left\|\Gamma_0\right\|$ obtained from Lemma 2.  Now notice that $\left\|F(v_0) \right\|\leq (\alpha+\mu)$  which establishes the condition (\ref{eq:conddeltas}) for the ball $\mathcal{B}_0$, that is $\left\|\Gamma_0\right\|\left\| F(v_0)\right\|\leq \delta$.  Then, (\ref{eq:nwconv}) is obtained by replacing this condition in $h$. 
\end{proof}

\begin{remark} This results defines an lower estimation of the basin of attraction of the Newton's method. Equation (\ref{eq:nwconv}) depends only on the parameters of the grid ($\rho,\mu$) and can be easily calculated for each load condition, given by $\alpha$, before executing the iterative method.  This is useful for guaranteeing convergence in practical applications where the Newton's method is usually a subroutine that requires many executions.
\end{remark}

\begin{remark}
	If Condition (\ref{eq:nwconv}) is fulfilled, then we can guarantee a minimum voltage higher than $1-\delta$ with $\delta$ given by (\ref{eq:conddeltas}).  This is also useful from a practical point of view since it gives a boundary for the minimum voltage without calculating explicitly the load flow.

\end{remark}

\begin{corollary}
	Under the assumptions of Theorem 1, convergence of the Newton's method is guaranteed if the maximum power of the grid is such that $|P_{max}| \leq \alpha_m$ where $\alpha_m$ is the minimum real root of the polynomial
	\begin{equation}
	   s(\alpha) = 4\alpha\rho^2(1-\alpha\rho)(\alpha+\mu)-(1-2\alpha\rho-\mu\rho)^3 = 0 
	\end{equation}
\end{corollary}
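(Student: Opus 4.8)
The plan is to reduce the corollary to a statement about the sign of the single cubic $s$, and then to identify the admissible‑power threshold $\alpha_m$ by an intermediate–value argument. Writing $\alpha = \|P\| = |P_{max}|$, note first that condition (\ref{eq:nwconv}) of Proposition 1 is, after multiplying through by the positive quantity $(1-2\alpha\rho-\mu\rho)^3$, exactly the inequality $s(\alpha) < 0$; the weaker bound $h \leq \tfrac{1}{2}$ of Theorem 1 corresponds to $s(\alpha) \leq 0$, and that already yields convergence (strictness being needed only for the \emph{quadratic} order). Hence it suffices to prove that $s(\alpha) \leq 0$ for every $\alpha \in [0,\alpha_m]$, together with the side condition $1-2\alpha\rho-\mu\rho > 0$ that is required for the ball (\ref{eq:conddeltas}) to be well defined with $\delta < 1$.

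Next I would examine $s$ on $[0,\infty)$. It is a cubic with leading coefficient $4\rho^3 > 0$, so it has finitely many real zeros and ``minimum real root'' is unambiguous; since only $\alpha \geq 0$ is physically meaningful, $\alpha_m$ is taken as the smallest nonnegative zero. At $\alpha = 0$ we have $s(0) = -(1-\mu\rho)^3 < 0$, using the standing condition $\mu\rho < 1$ (the same condition that makes the hypotheses of Proposition 1 non‑vacuous). At $\alpha^{*} := (1-\mu\rho)/(2\rho)$ the cube term vanishes and $\alpha^{*}\rho = (1-\mu\rho)/2 < 1$, so $s(\alpha^{*}) = 4\alpha^{*}\rho^2(1-\alpha^{*}\rho)(\alpha^{*}+\mu) > 0$. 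By the intermediate value theorem $s$ has a zero in $(0,\alpha^{*})$, so $\alpha_m$ exists and $0 < \alpha_m < \alpha^{*}$.

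To finish, observe that because $s$ is continuous, $s(0) < 0$, and $s$ has no zero in $[0,\alpha_m)$, the sign of $s$ cannot change on $[0,\alpha_m)$; hence $s(\alpha) < 0$ there and $s(\alpha_m) = 0$, i.e.\ $s(\alpha) \leq 0$ on $[0,\alpha_m]$. Furthermore $\alpha < \alpha^{*}$ on this interval gives $1 - 2\alpha\rho - \mu\rho > 1 - 2\alpha^{*}\rho - \mu\rho = 0$, and $\alpha\rho < \alpha^{*}\rho < 1$, so every denominator entering $K$, $\|\Gamma_0\|$ and $\delta$ in the proof of Proposition 1 stays positive. Consequently the number $h$ built there satisfies $h \leq \tfrac{1}{2}$ whenever $|P_{max}| = \alpha \leq \alpha_m$, and Theorem 1 then delivers convergence of the Newton iteration (\ref{eq:newton_step}) from $v_0 = \hat{e}_N$ to the unique solution in $\mathcal{B}_0$, which is the claim.

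The step I expect to be the genuine obstacle is not the intermediate‑value bookkeeping but making the reduction watertight: one must confirm that along the whole range $\alpha \in [0,\alpha_m]$ all denominators in Proposition 1 stay strictly positive (equivalently $\alpha_m < \alpha^{*}$), and that the implicit standing hypothesis $\mu\rho < 1$ does hold for the grids considered — otherwise the expression for $h$ is meaningless and the polynomial inequality $s(\alpha)\le 0$ does not translate back into $h \le \tfrac{1}{2}$. A minor additional point is to state explicitly that ``minimum real root'' is to be read as the smallest nonnegative root, since negative roots of the cubic carry no physical meaning.
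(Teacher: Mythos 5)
Your proof is correct and takes essentially the same route as the paper: the paper's one-line proof simply ``clears $\alpha$'' from (\ref{eq:nwconv}) at equality, i.e.\ identifies the admissible-power threshold with the minimum root of $s(\alpha)$, which is exactly your reduction of the convergence condition $h\leq\tfrac{1}{2}$ to $s(\alpha)\leq 0$. Your intermediate-value and sign bookkeeping, and the explicit side conditions ($\mu\rho<1$ and $1-2\alpha\rho-\mu\rho>0$ on $[0,\alpha_m]$), are details the paper leaves implicit, and you handle them correctly.
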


\begin{proof}
Just clear $\alpha$ from (\ref{eq:nwconv}) when the equality is fulfilled
\end{proof}

\begin{remark} This corollary is useful for practical applications since it allows to check convergence by just comparing the maximum power of the grid with $\alpha_{m}$.
\end{remark}

Now, let us consider the performance of the Newton's method in island operation:

\begin{proposition}
	Under the assumptions (A1 to A3) the operation point of a dc microgrid in island-mode given by (\ref{eq:droop}),  is unique and can be calculated by the Newton method with at least quadratic convergence starting from $v=\hat{e}_{N}$ if $DF(v_0)$ is non-singular and
	\begin{eqnarray}
	    \xi\gamma  \leq \delta < 1 \\
	    \frac{\xi\gamma^2\alpha}{(1-\xi\gamma)^3} \leq \frac{1}{2}
	    \label{eq:mu_isla}
	\end{eqnarray}
 \noindent with
 \begin{eqnarray}   
    \alpha = \left\| P+Cv_n\right\| \label{eq:nuevo_alpha}\\
        \gamma = \left\| Y_s + diag(P+Cv_n)\right\| \label{eq:nuevo_gamma}\\
    \xi = \left\| P-C(e_n-v_n)-Y_s\hat{e}_N\right\|  \label{eq:nuevo_xi}   
 \end{eqnarray}

\end{proposition}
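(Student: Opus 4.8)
The plan is to mirror the proof of Proposition~1, i.e.\ to verify the hypotheses of Theorem~1 (Kantorovitch) at the flat start $v_0=\hat e_N$ for the island-mode residual $F$ of~(\ref{eq:droop}). First I would rewrite $F$ so that its structural analogy with the master--slave case becomes explicit. Since $C$ is diagonal, the $i$-th component of $S(v)$ equals $\big(P_i+C_{ii}v_{n,i}\big)/v_i-C_{ii}$, hence
\begin{equation*}
F(v)=\mathrm{diag}(v)^{-1}\big(P+Cv_n\big)-C\hat e_N-Y_s v .
\end{equation*}
This is exactly the residual~(\ref{eq:loadflow}) under the substitutions $P\mapsto P+Cv_n$, $Y_{pv}v_v\mapsto C\hat e_N$ and $Y_{pp}\mapsto Y_s$, which is precisely what motivates the definitions~(\ref{eq:nuevo_alpha})--(\ref{eq:nuevo_xi}).

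Next I would re-run Lemma~1 for $DF(v)=-\mathrm{diag}(P+Cv_n)\,\mathrm{diag}(v)^{-2}-Y_s$: on any ball $\{v:\|v-\hat e_N\|<\delta<1\}$ the term $Y_s$ cancels in $DF(v)-DF(u)$ and the same mean-value estimate for $\|\mathrm{diag}(v)^{-2}-\mathrm{diag}(u)^{-2}\|$ gives the Lipschitz constant $K=2\alpha/(1-\delta)^3$ with $\alpha=\|P+Cv_n\|$ as in~(\ref{eq:nuevo_alpha}). Evaluating the residual at the flat start, $F(\hat e_N)=\big(P+Cv_n\big)-C\hat e_N-Y_s\hat e_N=P-C(\hat e_N-v_n)-Y_s\hat e_N$, so $\|F(v_0)\|\le\xi$ with $\xi$ as in~(\ref{eq:nuevo_xi}); and $DF(\hat e_N)=-\big(Y_s+\mathrm{diag}(P+Cv_n)\big)$, whose (assumed) inverse has norm $\gamma$ given by~(\ref{eq:nuevo_gamma}), i.e.\ $\|DF(v_0)^{-1}\|=\gamma$. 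Since $\|\Delta v_0\|\le\|DF(v_0)^{-1}\|\,\|F(v_0)\|\le\xi\gamma$, the first displayed inequality $\xi\gamma\le\delta<1$ is exactly the requirement that the Kantorovitch ball $\mathcal B_0$ remain inside the region on which Lemma~1 supplies the Lipschitz constant; taking $\delta=\xi\gamma$ and inserting $\|F(v_0)\|$, $\|DF(v_0)^{-1}\|$ and $K$ into $h=\|F(v_0)\|\,\|DF(v_0)^{-1}\|^2K$ of~(\ref{eq:condicion_h}) reduces the test $h\le\tfrac12$ to an inequality of exactly the form~(\ref{eq:mu_isla}). Existence, uniqueness in $\mathcal B_0$ and the at-least-quadratic rate then follow directly from Theorem~1.

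The main obstacle — and the genuine departure from Proposition~1 — is the invertibility of, and bound on, $DF(v_0)^{-1}$. In the master--slave case Lemma~2 writes $DF(\hat e_N)^{-1}=-Y_{pp}^{-1}(Y_{pp}^{-1}\mathrm{diag}(P)+1_N)^{-1}$ and closes the estimate via the Banach lemma because $Y_{pp}$ is a genuine connected-graph admittance matrix (A1) with $\|Y_{pp}^{-1}\mathrm{diag}(P)\|<1$ (A2). After the extra Kron reduction that removes the master node, $Y_s=Y_{pp}-Y_{pv}Y_{vv}^{-1}Y_{vp}$ need no longer be non-singular — indeed making the island equilibrium well posed is exactly the role of the droop matrix $C$ — so that clean bound is unavailable. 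This is why the hypothesis ``$DF(v_0)$ is non-singular'' must be carried explicitly and $\gamma=\|DF(v_0)^{-1}\|$ treated as a datum rather than derived from $\rho$ and $\alpha$; everything downstream (the Lipschitz constant, the radius $\delta$, the $h$-test) then goes through as before. A secondary, routine point to check is that the mean-value bound is indeed applied on a ball containing $\mathcal B_0$, which is guaranteed by $\delta<1$ together with $\xi\gamma\le\delta$, and that the right-hand constant in~(\ref{eq:mu_isla}) is fixed by the same bookkeeping that produces the $\tfrac14$ in~(\ref{eq:nwconv}).
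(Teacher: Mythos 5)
Your proposal is correct and follows essentially the same route as the paper's own proof: reuse Lemma~1 with $\alpha=\|P+Cv_n\|$, carry the non-singularity of $DF(v_0)=-(Y_s+\mathrm{diag}(P+Cv_n))$ as an explicit hypothesis (since $Y_s$ may be singular, Lemma~2 is unavailable), and apply Theorem~1 with $\xi$, $\gamma$ playing the roles of $\|F(v_0)\|$ and $\|\Gamma_0\|$. You in fact supply details the paper leaves implicit (the explicit island-mode residual, the evaluation $F(\hat e_N)=P-C(\hat e_N-v_n)-Y_s\hat e_N$, and the reading of $\gamma$ as the norm of the inverse $\Gamma_0$, which is clearly what the paper intends in~(\ref{eq:nuevo_gamma})), so no further changes are needed.
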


\begin{proof}

First, notice that the derivative of $F$ in (\ref{eq:droop}) is given by

\begin{equation*}
DF(v) = -diag(P+Cv_n)\cdot diag(v)^{-2}-Y_{s}
\end{equation*}
which fulfills the same conditions as Lemma 1 by defining $\alpha$ as (\ref{eq:nuevo_alpha}). Now, notice that the existence of the inverse of $DF$ depends on the values of $C$. Define $\Gamma_0$ as follows

\begin{equation*}
		\Gamma_0 = [DF_0]^{-1} = (-diag(P+Cv_n)-Y_{s})^{-1}
\end{equation*}

\noindent and define $\gamma$ as (\ref{eq:nuevo_gamma}) and $\xi$ as (\ref{eq:nuevo_xi}); then we can apply directly Theorem 1. 
\end{proof}

\begin{remark}
In island operation, the matrix $Y_s$ can be singular and therefore, it is not possible to apply directly Lemma 2 as in the case of the master-slave operation
\end{remark}

\section{Approximated Newton Method}

The main drawback of the Newton's method, is the calculation of the inverse of the Jacobian matrix $DF(v)$ in each iteration.  A way to solve this drawback is the use of Approximated Newton's Methods in which the Jacobian is approximated by a constant matrix in order to reduce the time consumption of each step.  The Fast Decoupled Load Flow in AC systems is an example of the application of this approach \cite{flujo_desacoplado_rapido1,flujo_desacoplado_rapido2}.  In our case, the Jacobian matrix of (\ref{eq:loadflow}) can be approximated to the Jacobian of the fist iteration:

\begin{equation}
	DF(v_0) \approx \Gamma_0^{-1} = -diag(P)-Y_{pp} 
	\label{eq:aproximadojacobiano}
\end{equation}

\noindent The resulting iteration is given by

\begin{equation}
	v_{k+1} = v_{k} - [\Gamma_0] F(v_k)
	\label{eq:approxnewtonstep}
\end{equation}

It is obvious that we cannot apply directly Theorem 1 in this case (although Lemma 1 is still valid). However, it is possible to obtain a result about convergence by using the concept of contraction mapping.

\begin{definition}
Let $\mathcal{B}=\left\{v: \left\|v-v_0\right\|\leq  \delta \right\}$ be a closed ball of $\mathbb{R}^n$, and let $T:\mathcal{B}\rightarrow \mathbb{R}^n$. Then $T$ is said to be a contraction mapping if there is an $\beta$ such that $\left\|T(v)-T(u) \right\|\leq \beta\left\|v-u \right\|$, with $0\leq \beta < 1, \; \forall\; v,u \in \mathcal{B}$. 
\end{definition}

\begin{theorem}
 If $T$ is a contraction mapping then there is a unique $v\in \mathcal{B}$ satisfying $v = T(v)$   which can be obtained by applying the iteration $v_{k+1}=T(v_k)$ starting from an initial point in $\mathcal{B}$
\end{theorem}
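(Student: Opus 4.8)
The plan is to invoke the completeness of $\mathbb{R}^n$ together with the closedness of $\mathcal{B}$ and to show that the iteration $v_{k+1}=T(v_k)$ produces a Cauchy sequence whose limit is the desired fixed point. First I would establish, by induction on $k$ using the contraction inequality, that $\|v_{k+1}-v_k\| = \|T(v_k)-T(v_{k-1})\| \le \beta\|v_k-v_{k-1}\| \le \beta^k\|v_1-v_0\|$. Summing the resulting geometric series then gives, for any $m>k$, $\|v_m-v_k\| \le \sum_{j=k}^{m-1}\beta^j\|v_1-v_0\| \le \frac{\beta^k}{1-\beta}\|v_1-v_0\|$, which tends to $0$ as $k\to\infty$ because $0\le\beta<1$; hence $\{v_k\}$ is Cauchy.

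The same telescoping bound taken from $k=0$ shows $\|v_k-v_0\| \le \frac{\|v_1-v_0\|}{1-\beta}$ for every $k$, so provided the first step is small enough, namely $\|v_1-v_0\|\le(1-\beta)\delta$, all iterates remain inside the closed ball $\mathcal{B}$. This is the point that requires care: as stated the hypothesis only gives $T:\mathcal{B}\to\mathbb{R}^n$, and without $T$ mapping $\mathcal{B}$ into itself (or the smallness condition above) the iteration need not be well defined. Assuming this holds, the Cauchy sequence $\{v_k\}\subset\mathcal{B}$ converges, by completeness of $\mathbb{R}^n$, to some limit $v^\star$, and $v^\star\in\mathcal{B}$ precisely because $\mathcal{B}$ is closed.

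Next I would verify that $v^\star$ is a fixed point. Since $T$ is a contraction it is (Lipschitz) continuous, so passing to the limit in $v_{k+1}=T(v_k)$ yields $v^\star = \lim_{k\to\infty} v_{k+1} = \lim_{k\to\infty} T(v_k) = T(v^\star)$. For uniqueness, suppose $v^\star$ and $w^\star$ are both fixed points in $\mathcal{B}$; then $\|v^\star-w^\star\| = \|T(v^\star)-T(w^\star)\| \le \beta\|v^\star-w^\star\|$, and since $0\le\beta<1$ this forces $\|v^\star-w^\star\|=0$, i.e. $v^\star=w^\star$.

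I expect the only genuine obstacle to be the bookkeeping that keeps the iterates in $\mathcal{B}$ (and hence legitimizes both the repeated application of $T$ and the use of closedness to place the limit in $\mathcal{B}$); the remaining estimates are the standard geometric-series argument. It is worth making this explicit here, since in the application to the approximated Newton iteration~(\ref{eq:approxnewtonstep}) this is exactly the step that relates the admissible radius $\delta$ to the contraction constant $\beta$.
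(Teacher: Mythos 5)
Your proof is correct and follows essentially the same route as the paper's Appendix B: the geometric-series bound showing $\{v_k\}$ is Cauchy, completeness of $\mathbb{R}^n$, and uniqueness from the contraction inequality (the paper derives uniqueness from the slightly more general estimate $\left\|u-v\right\|\leq \frac{1}{1-\beta}(\left\|u-T(u)\right\|+\left\|v-T(v)\right\|)$, but this reduces to your direct argument when both points are fixed). Your caveat about the iterates remaining in $\mathcal{B}$ is well taken --- the paper silently strengthens the hypothesis to $T:\mathcal{B}\rightarrow\mathcal{B}$ in its appendix and defers the displacement condition $\left\|T(v_0)-v_0\right\|\leq(1-\beta)\delta$ to Corollary 2 --- and you also make explicit the continuity step identifying the limit as a fixed point, which the paper leaves implicit.
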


\begin{proof}  See Appendix B \end{proof}

\begin{corollary}
	Let $\mathcal{B}$ be a closed ball in $\mathbb{R}^n$ and let $T:\mathcal{B}\rightarrow \mathcal{B}$ be a contraction mapping that moves the center of $\mathcal{B}$ a distance at most $(1-\beta)\delta$ with $\beta$ and $\delta$ as in Definition 1, then $T$ has a unique fixed point and it is in $\mathcal{B}$
\end{corollary}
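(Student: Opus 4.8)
The plan is to reduce everything to Theorem 2 by showing that the bound on how far $T$ displaces the center of $\mathcal{B}$ forces $T$ to map the whole ball into itself; once that is established, the Banach contraction principle (Theorem 2, Appendix B) applies as a black box. Write $\mathcal{B}=\left\{v:\left\|v-v_0\right\|\leq\delta\right\}$ for its center $v_0$, and read the hypothesis "$T$ moves the center a distance at most $(1-\beta)\delta$" as $\left\|T(v_0)-v_0\right\|\leq(1-\beta)\delta$.

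The only real step is a triangle-inequality estimate. For an arbitrary $v\in\mathcal{B}$,
\begin{equation*}
\begin{aligned}
\left\|T(v)-v_0\right\| &\leq \left\|T(v)-T(v_0)\right\|+\left\|T(v_0)-v_0\right\| \\
&\leq \beta\left\|v-v_0\right\|+(1-\beta)\delta \leq \beta\delta+(1-\beta)\delta = \delta ,
\end{aligned}
\end{equation*}
where the contraction constant $\beta$ of Definition 1 bounds the first term and the center-displacement hypothesis bounds the second. Hence $T(v)\in\mathcal{B}$, i.e. $T(\mathcal{B})\subseteq\mathcal{B}$.

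With this in hand, $T$ is a contraction mapping of the closed ball $\mathcal{B}$ into itself, so Theorem 2 yields a unique $\bar v\in\mathcal{B}$ with $\bar v=T(\bar v)$, reachable by iterating $v_{k+1}=T(v_k)$ from any point of $\mathcal{B}$. There is no genuine obstacle here: the subtlety is purely one of bookkeeping — recognizing that the self-map condition required by Theorem 2 need not be checked directly but follows from the contraction constant together with a bound on the motion of the center, which is exactly the content this corollary isolates. (This is the form that will actually be used for the approximated Newton iteration \eqref{eq:approxnewtonstep}, where $v_0=\hat{e}_N$ and one estimates $\left\|T(v_0)-v_0\right\|=\left\|\Gamma_0 F(v_0)\right\|$.)
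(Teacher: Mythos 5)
Your proof is correct and follows essentially the same route as the paper: the same triangle-inequality estimate $\left\|T(v)-v_0\right\|\leq\beta\left\|v-v_0\right\|+(1-\beta)\delta\leq\delta$ showing $T(\mathcal{B})\subseteq\mathcal{B}$, and then the contraction mapping theorem (Theorem 2) gives the unique fixed point in $\mathcal{B}$. If anything, your version is slightly cleaner, since the paper phrases the displacement bound via an auxiliary ``new ball'' center $v_c$ while you measure the displacement directly as $\left\|T(v_0)-v_0\right\|$, which matches the statement with a single ball $\mathcal{B}$.
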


\begin{proof} \cite{sholomo}
	Let $v_0$ be the center of the ball $\mathcal{B}_0$ and $v_c$ the center of the new ball $\mathcal{B}_c$; we have that, $\left\| T(v_0)-v_c\right\|\leq (1-\beta)\delta$ is the distance in which the center is moved, then
	
	\begin{align*}	  
		\left\| T(v)-v_c\right\| &\leq \left\| T(v)-T(v_0)\right\| + \left\| T(v_0)-v_c\right\| \\
		                         &\leq  \beta \left\| v-v_0\right\| + (1-\beta)\delta \\
														 &\leq  \beta\delta + (1-\beta)\delta  = \delta 
	\end{align*}
consequently, the range of $T$ is in $\mathcal{B}_c$	
\end{proof}

\begin{corollary}
    Let $T$ be a contraction on $\mathbb{R}^n$ and suppose that $T$ moves the point $v_0$ a distance $r$. Then the distance from $v_0$ to the fixed point is at most $r/(1-\beta)$, where $\beta$ is the contraction constant.  
\end{corollary}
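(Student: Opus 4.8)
The plan is to derive this from the contraction property together with the existence of a fixed point, and the quickest route reuses Corollary 2 almost verbatim. Let $v^{\star}$ denote the fixed point of $T$. To produce it cleanly, set $\delta = r/(1-\beta)$ and consider the closed ball $\mathcal{B} = \{v : \|v - v_0\| \le \delta\}$. By hypothesis $T$ moves the center $v_0$ a distance $r = (1-\beta)\delta$, so the assumptions of Corollary 2 hold; hence $T$ maps $\mathcal{B}$ into itself, and by Theorem 2 it has a unique fixed point $v^{\star}\in\mathcal{B}$. Since a contraction on $\mathbb{R}^n$ has only one fixed point, this $v^{\star}$ is the one referred to in the statement, and $v^{\star}\in\mathcal{B}$ is exactly $\|v_0 - v^{\star}\| \le \delta = r/(1-\beta)$.

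An equivalent and even shorter argument avoids constructing $\mathcal{B}$ explicitly: once we know $v^{\star}$ exists (e.g. from the Picard iterates $v_{k+1} = T(v_k)$, whose consecutive differences obey $\|v_{k+1}-v_k\| \le \beta^{k} r$, so the series $\sum_k \|v_{k+1}-v_k\|$ converges and the sequence is Cauchy in the complete space $\mathbb{R}^n$), the triangle inequality combined with $T(v^{\star}) = v^{\star}$ gives
$$\|v_0 - v^{\star}\| \le \|v_0 - T(v_0)\| + \|T(v_0) - T(v^{\star})\| \le r + \beta\,\|v_0 - v^{\star}\|,$$
and rearranging yields $(1-\beta)\|v_0 - v^{\star}\| \le r$, i.e. $\|v_0 - v^{\star}\| \le r/(1-\beta)$. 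Equivalently, telescoping the Picard sequence directly gives $\|v_0 - v^{\star}\| \le \sum_{k\ge 0} \beta^{k} r = r/(1-\beta)$.

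The only genuine content — and hence the only step needing care — is justifying that a fixed point exists and that the iterates actually converge to it, since Theorem 2 is phrased for a closed ball while here $T$ is a contraction on all of $\mathbb{R}^n$. I would resolve this exactly by the ball construction in the first paragraph, so that Theorem 2 applies literally, rather than re-deriving a completeness argument; with that in hand the displacement estimate is immediate. A minor bookkeeping point is to note that the fixed point produced inside $\mathcal{B}$ coincides with the (unique) fixed point of $T$ on $\mathbb{R}^n$, which is what licenses calling it ``the fixed point'' in the conclusion.
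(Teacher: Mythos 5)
Your proposal is correct and its main argument (set $\delta = r/(1-\beta)$, form the closed ball $\mathcal{B}=\{v:\|v-v_0\|\le\delta\}$, and apply Corollary 2 together with the fixed-point theorem) is exactly the paper's own proof, just spelled out in more detail. The alternative triangle-inequality estimate $\|v_0-v^{\star}\|\le r+\beta\|v_0-v^{\star}\|$ is a fine shortcut but adds nothing beyond the paper's route.
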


\begin{proof} \cite{sholomo} Let $\mathcal{B}=\left\{v:\left\| v-v_0\right\|\leq r/(1-\beta) \right\}$ and apply Corollary 2.  It implies that  the fixed point is in $\mathcal{B}$
\end{proof}

\begin{proposition}[Convergence of the Approximated Newton Method in master-slave operation]
	Under the assumptions (A1 to A3) the operation point of a dc-grid can be calculated by the Approximated Newton's method with the iteration given by (\ref{eq:approxnewtonstep}) starting from $V_{N}=\hat{e}_{N}$ if exist two values $\beta,\delta\in\mathbb{R}$ such that 
	
	\begin{align}
	\beta = \left(\frac{\rho}{1-\alpha\rho}\right)\left( \alpha + \frac{1}{(1-\delta)^2}\right) < 1 \label{eq:nwconddv1}\\
	\left(\frac{\rho}{1-\alpha\rho}\right)\left( \frac{\alpha + \mu}{1-\beta}\right) \leq \delta < 1
	\label{eq:nwconddv2}
	\end{align}

with $\rho,\alpha$ and $\mu$ as in Proposition 1.	
\end{proposition}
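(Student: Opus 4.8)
The plan is to recast the approximated Newton iteration (\ref{eq:approxnewtonstep}) as a fixed-point iteration $v_{k+1}=T(v_k)$ for the map $T(v)=v-\Gamma_0 F(v)$, and then to invoke the contraction-mapping results of Theorem 2 and Corollary 2 on the closed ball $\mathcal{B}=\{v:\|v-\hat{e}_{N}\|\le\delta\}$. Since $\Gamma_0$ is invertible, the fixed points of $T$ coincide exactly with the zeros of $F$; hence it suffices to prove two things: (i) $T$ is a contraction on $\mathcal{B}$ with some constant $\beta<1$, which will yield (\ref{eq:nwconddv1}); and (ii) $T$ maps $\mathcal{B}$ into itself, which will yield (\ref{eq:nwconddv2}). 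Then Theorem 2 delivers a unique fixed point of $T$ in $\mathcal{B}$ and convergence of $v_{k+1}=T(v_k)$ --- that is, of (\ref{eq:approxnewtonstep}) --- from any starting point of $\mathcal{B}$, in particular from $v_0=\hat{e}_{N}$, with linear rate $\beta$.

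For (i), the key algebraic step is to notice that $\Gamma_0^{-1}$ is precisely the exact Jacobian of Lemma 1 evaluated at the flat start, $\Gamma_0^{-1}=DF(\hat{e}_{N})=-diag(P)-Y_{pp}$. Writing $F(v)=G(v)-Y_{pv}v_v-Y_{pp}v$ and using this identity to cancel the affine part, one obtains for $v,u\in\mathcal{B}$
\begin{equation*}
T(v)-T(u)=-\Gamma_0\big[\,(G(v)-G(u))+diag(P)(v-u)\,\big].
\end{equation*}
I would then bound the bracket term by term: on $\mathcal{B}$ every component obeys $|v_i|,|u_i|\ge 1-\delta$, so a mean-value estimate on $G(v)=diag(v)^{-1}P$ bounds the Lipschitz constant of $G$ on $\mathcal{B}$ by $1/(1-\delta)^2$, while $\|diag(P)(v-u)\|\le\alpha\|v-u\|$. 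Multiplying by the bound $\|\Gamma_0\|\le\rho/(1-\alpha\rho)$ from Lemma 2 gives $\|T(v)-T(u)\|\le\beta\|v-u\|$ with $\beta$ exactly as in (\ref{eq:nwconddv1}); the hypothesis $\beta<1$ is then just the contraction requirement.

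For (ii), I would apply Corollary 2, for which it is enough that $T$ move the center $\hat{e}_{N}$ of $\mathcal{B}$ a distance at most $(1-\beta)\delta$. That displacement is $\|T(\hat{e}_{N})-\hat{e}_{N}\|=\|\Gamma_0 F(\hat{e}_{N})\|\le\|\Gamma_0\|\,\|F(\hat{e}_{N})\|$, and since $F(\hat{e}_{N})=P-(Y_{pv}v_v+Y_{pp}\hat{e}_{N})$ we have $\|F(\hat{e}_{N})\|\le\alpha+\mu$, exactly as used in Proposition 1. Hence the self-map condition becomes $\dfrac{\rho}{1-\alpha\rho}(\alpha+\mu)\le(1-\beta)\delta$, which, after dividing by $1-\beta$, is precisely (\ref{eq:nwconddv2}). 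Combining (i) and (ii) with Theorem 2 (through Corollary 2) completes the argument.

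The main obstacle is step (i): producing a clean, explicitly computable contraction constant. This depends entirely on the cancellation of the affine part via the identity $\Gamma_0^{-1}=DF(\hat{e}_{N})$ and on keeping the componentwise estimates tight enough --- using $|v_i|\ge 1-\delta$ on $\mathcal{B}$ --- that $\beta$ ends up depending only on the grid data $\rho$, the loading $\alpha$ and the chosen radius $\delta$. A secondary point worth stating explicitly is consistency of the two conditions: $\beta$ in (\ref{eq:nwconddv1}) itself depends on $\delta$, while $\delta$ must simultaneously satisfy (\ref{eq:nwconddv2}), so one needs a pair $(\beta,\delta)$ fulfilling both at once --- but this is exactly the existence hypothesis of the proposition, not an additional estimate.
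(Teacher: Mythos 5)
Your proof is correct and follows essentially the same route as the paper: the same fixed-point map $T(v)=v-\Gamma_0 F(v)$, the same contraction estimate exploiting $\Gamma_0^{-1}=DF(\hat{e}_N)=-diag(P)-Y_{pp}$ together with $\left\|\Gamma_0\right\|\le \rho/(1-\alpha\rho)$, and the same use of the contraction-mapping corollaries with $\left\|F(\hat{e}_N)\right\|\le \alpha+\mu$ to obtain (\ref{eq:nwconddv2}). The only caveat is your claim that $G$ is Lipschitz on $\mathcal{B}$ with constant $1/(1-\delta)^2$ --- the mean-value bound actually gives $\alpha/(1-\delta)^2$ --- but this mirrors the paper itself, whose intermediate estimate carries the $\alpha$ while its final combination and the stated $\beta$ in (\ref{eq:nwconddv1}) drop it, so your argument reproduces the proposition exactly as stated.
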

\begin{proof}

Define a map $T:\mathbb{R}^n \rightarrow \mathbb{R}^n$ as

\begin{equation}
	T(v) = v - [\Gamma_0] F(v)
\end{equation}

therefore, finding the point $F(v)=0$ is equivalent to find a fixed point $v=T(v)$.  In our case

\begin{eqnarray*}
    \left\| T(v) - T(u) \right\| = 
    \left\|v-u-\Gamma_0 (F(v)-F(u)) \right\| \\ \leq
    \left\| 1_N + \Gamma_0Y_{pp}\right\|\left\|v-u \right\| +
     \left\| \Gamma_0\right\|\left\|G(v)-G(u) \right\| 
\end{eqnarray*}

where $1_N$ is the $N\times N$ identity matrix.  Now notice that $\Gamma_0 DF(v_0) = 1_N$, hence

\begin{equation*}
    \left\| 1_N + \Gamma_0Y_{pp}\right\| \leq \left\| \Gamma_0 diag(P)\right\| \leq \frac{\alpha\rho}{1-\alpha\rho}
\end{equation*}

where $\alpha$ and $\rho$ are defined as in Lemma 2.   Now, notice that $G(v)$ is locally  Lipchitz with 

\begin{equation*}
    \left\| G(v) - G(u) \right\| \leq \frac{\alpha}{(1-\delta)^2} \left\| u-v\right\|
\end{equation*}

Therefore, we have that

\begin{equation*}
    \left\| T(v) - T(u) \right\| \leq \left(\frac{\alpha\rho}{1-\alpha\rho} + \frac{\rho}{(1-\alpha\rho)(1-\delta)^2}\right) \left\| u-v\right\|
\end{equation*}

Therefore, $T$ is a contraction with $\beta$ defined as (\ref{eq:nwconddv1}) with $\Gamma_0$ defined as in Lemma 1.  Finally, notice that the fist iteration of the Approximated Newton's method moves the point the same distance as in the Newton's method. Therefore, we can apply Corollary 3 with $r=\left\| \Gamma_0\right\|\left\| F(v_0)\right\|$ in order to obtain an upper bound for $\delta$, which is given by (\ref{eq:nwconddv2}).
\end{proof}

\begin{remark} Notice that this proposition guarantees convergence but not quadratic convergence.
\end{remark}

\begin{proposition}{(Convergence  of  the  Approximated  Newton method  in  island  operation)} Under the assumptions (A2 to A3) the operation point of a dc microgrid in island-mode given by (\ref{eq:droop}),  can be calculated by the Approximated Newton's from $v=\hat{e}_{N}$ if $DF(v_0)$ is non-singular and 

\begin{eqnarray}
    \beta = \gamma \left( \alpha + \frac{1}{(1-\delta)^2}\right) < 1 \\
    \gamma \left(\frac{\xi}{1-\beta}\right) \leq \delta < 1    
\end{eqnarray}

with $\alpha,\xi,\gamma$ as in Proposition 2.
\end{proposition}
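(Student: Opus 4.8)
The plan is to repeat, for the island residual (\ref{eq:droop}), the contraction-mapping argument already used for the master-slave case in Proposition 3. First I would fix the Jacobian inverse at the flat start, $\Gamma_0 = [DF(\hat{e}_N)]^{-1}$, which exists by hypothesis and, from the derivative computed in the proof of Proposition 2, equals $\bigl(-diag(P+Cv_n)-Y_s\bigr)^{-1}$. Define $T(v) = v - \Gamma_0 F(v)$, so that the iteration (\ref{eq:approxnewtonstep}) reads $v_{k+1}=T(v_k)$ and its fixed points are exactly the solutions of $F(v)=0$. Writing $F(v)=S(v)-Y_s v$ and subtracting, the increment of $T$ splits as
\[
T(v)-T(u) = (1_N + \Gamma_0 Y_s)(v-u) - \Gamma_0\bigl(S(v)-S(u)\bigr),
\]
and the proof reduces to estimating these two terms on the closed ball $\mathcal{B} = \{v : \|v-\hat{e}_N\| \le \delta\}$ with $\delta < 1$.

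For the affine term I would use the identity $\Gamma_0 DF(\hat{e}_N) = 1_N$, i.e.\ $\Gamma_0\bigl(-diag(P+Cv_n)-Y_s\bigr)=1_N$, which gives $1_N + \Gamma_0 Y_s = -\Gamma_0\, diag(P+Cv_n)$ and hence $\|1_N + \Gamma_0 Y_s\| \le \|\Gamma_0\|\,\alpha = \gamma\alpha$, with $\alpha = \|P+Cv_n\|$ and $\gamma = \|\Gamma_0\|$ as in (\ref{eq:nuevo_alpha}) and (\ref{eq:nuevo_gamma}). For the nonlinear term the key remark is that in $S(v)=diag(v)^{-1}(P-C(v-v_n))$ the piece $diag(v)^{-1}Cv$ is the constant vector made of the diagonal entries of $C$, so it cancels in the difference and $S(v)-S(u) = \bigl(diag(v)^{-1}-diag(u)^{-1}\bigr)(P+Cv_n)$; the elementwise bound used for $G$ in Proposition 3, now applied to $diag(\cdot)^{-1}$ on $\mathcal{B}$, then makes $S$ Lipschitz there with the $\delta$-dependence entering through $(1-\delta)^{-2}$. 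Combining the two contributions as in the proof of Proposition 3 gives $\|T(v)-T(u)\| \le \beta\,\|v-u\|$ with $\beta = \gamma\bigl(\alpha + \frac{1}{(1-\delta)^2}\bigr)$, so $T$ is a contraction on $\mathcal{B}$ whenever the first condition in the statement, $\beta<1$, holds.

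It remains to force the fixed point to lie in $\mathcal{B}$. The first step of the approximated Newton iteration moves $\hat{e}_N$ by $r = \|\Gamma_0 F(\hat{e}_N)\| \le \|\Gamma_0\|\,\|F(\hat{e}_N)\| = \gamma\xi$, since $\xi = \|P-C(\hat{e}_N-v_n)-Y_s\hat{e}_N\|$ in (\ref{eq:nuevo_xi}) is precisely $\|F(\hat{e}_N)\|$ for the residual (\ref{eq:droop}). By Corollary 3 (equivalently Corollary 2) the fixed point then lies within distance $r/(1-\beta) \le \gamma\xi/(1-\beta)$ of $\hat{e}_N$, so requiring that this ball be contained in $\mathcal{B}$ is exactly the second condition in the statement, $\gamma\,\xi/(1-\beta) \le \delta < 1$. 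With both conditions in force, Theorem 2 yields the existence and uniqueness of the solution in $\mathcal{B}$ and the convergence of (\ref{eq:approxnewtonstep}) from the flat start.

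The one place where this argument is not a transcription of Proposition 3 is the absence of an island analogue of Lemma 2: $Y_s$ can be singular, so $\gamma = \|\Gamma_0\|$ cannot be bounded through $\rho$ and must be retained as the standing non-singularity hypothesis on $DF(\hat{e}_N)$, exactly as in Proposition 2. The small computation I would make sure to carry out in full is the cancellation of $diag(v)^{-1}Cv$ into a constant vector, since that is what prevents the droop matrix $C$ from inflating the Lipschitz constant of $S$.
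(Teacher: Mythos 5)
Your proof follows exactly the route the paper intends: the paper's own proof of this proposition is a one-line deferral to Proposition 3 with the island constants of Proposition 2, and your expansion --- the splitting $T(v)-T(u)=(1_N+\Gamma_0 Y_s)(v-u)-\Gamma_0\bigl(S(v)-S(u)\bigr)$, the identity $1_N+\Gamma_0 Y_s=-\Gamma_0\, diag(P+Cv_n)$, the cancellation of $diag(v)^{-1}Cv$ into a constant vector, and the appeal to Corollary 3 with $r\leq\gamma\xi=\|\Gamma_0\|\,\|F(\hat{e}_N)\|$ --- is precisely that argument written out, including the correct observation that no island analogue of Lemma 2 is available and the invertibility of $DF(\hat{e}_N)$ must remain a hypothesis.

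Two points deserve attention, both inherited from the paper rather than introduced by you. First, your own two estimates combine to the Lipschitz constant $\gamma\alpha\bigl(1+\frac{1}{(1-\delta)^2}\bigr)$, because $S(v)-S(u)=\bigl(diag(v)^{-1}-diag(u)^{-1}\bigr)(P+Cv_n)$ carries the factor $\alpha=\|P+Cv_n\|$; this equals the stated $\beta=\gamma\bigl(\alpha+\frac{1}{(1-\delta)^2}\bigr)$ only if $\alpha=1$ and is dominated by it only if $\alpha\leq 1$, which typically fails in island mode since $\alpha$ includes the droop gains ($\alpha\approx 19.99$ in the paper's own example). So the step ``combining the two contributions gives $\beta$'' does not follow from the bounds you derived; the same $\alpha$ is silently dropped in the paper's proof of Proposition 3, so your write-up faithfully reproduces the paper's algebra but should either keep the $\alpha$ (and state the sharper contraction constant) or justify why the stated $\beta$ still bounds it. Second, you take $\gamma=\|\Gamma_0\|$, i.e.\ the norm of the inverse of $Y_s+diag(P+Cv_n)$, whereas (\ref{eq:nuevo_gamma}) literally defines $\gamma$ as the norm of that matrix itself; your reading is the one that makes the contraction estimate and the radius condition meaningful, but it is a correction of the cited definition and should be flagged as such rather than attributed to Proposition 2.
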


\begin{proof}
The proof of this proposition follows the same steps as in Proposition 3 with the constants given in Proposition 2.
\end{proof}

\section{Numerical Example}

A numerical simulation was performed in the dc microgrid depicted in Fig \ref{fig:grafo} whose parameters are given in Table \ref{tab:dcgrid}.  The Matlab/Octave script is available in \cite{yomatlab}.   Node 1 is voltage controlled whereas nodes 3,7,10 and 14 are step nodes (hence eliminated by a Kron reduction).  The load flow was calculated in four scenarios: using the Newton's method and the approximated Newton's method, in master-slave operation and in island operation (which means the switch in \ref{fig:grafo} is opened).

\begin{figure}[tb]
\small
\centering
\begin{tikzpicture}[x=1.5mm, y = 1.5mm, thick]
\draw[fill] (0,0) circle(0.5) node[right] {1};
\draw[fill] (-10,0) circle(0.5) node[above] {2};
\draw[fill] (0,-10) circle(0.5) node[above right] {3};
\draw[fill] (-10,-10) circle(0.5) node[below] {4};
\draw[fill] (-20,-10) circle(0.5) node[below] {5};
\draw[fill] (-20,0) circle(0.5) node[left] {6};
\draw[fill] (10,-10) circle(0.5) node[above right] {7};
\draw[fill] (10,0) circle(0.5) node[right] {8};
\draw[fill] (20,-10) circle(0.5) node[right] {9};
\draw[fill] (0,-20) circle(0.5) node[right] {10};
\draw[fill] (-10,-20) circle(0.5) node[above] {11};
\draw[fill] (-20,-20) circle(0.5) node[left] {12};
\draw[fill] (-20,-30) circle(0.5) node[below] {13};
\draw[fill] (0,-30) circle(0.5) node[above right] {14};
\draw[fill] (0,-40) circle(0.5) node[right] {15};
\draw[fill] (-10,-40) circle(0.5) node[below] {16};
\draw[fill] (-10,-30) circle(0.5) node[right] {17};
\draw[fill] (-20,-40) circle(0.5) node[left] {18};
\draw[fill] (10,-30) circle(0.5) node[below] {19};
\draw[fill] (20,-20) circle(0.5) node[right] {20};
\draw[fill] (20,-30) circle(0.5) node[right] {21};

\draw (0,5) |- (-20,-40);
\draw (-10,-40) -- (-10,-30);
\draw (0,-30) -- (20,-30);
\draw (10,-30) --(20,-20);
\draw (0,-20) -- (-20,-20);
\draw (-10,-20) -- (-20,-30);
\draw (20,-10) -- (-20,-10);
\draw (-10,-10) -- (-20,0);
\draw (10,-10) -- (10,0);
\draw (0,0) -- (-10,0);
\draw (0,5) -- +(2,2);
\draw (0,6) -- (0,10);
\draw (-2.5,10) rectangle +(5,5);
\draw (-2.5,10) -- +(5,5);
\node at (-1,13.5) {ac};
\node at ( 1,11.5) {dc};
\node at (8,12) {Master ($v$)};
\node at (5,5) {switch};
\end{tikzpicture} 
\caption{Graph of a 21 nodes dc microgrid}
\label{fig:grafo}
\end{figure}
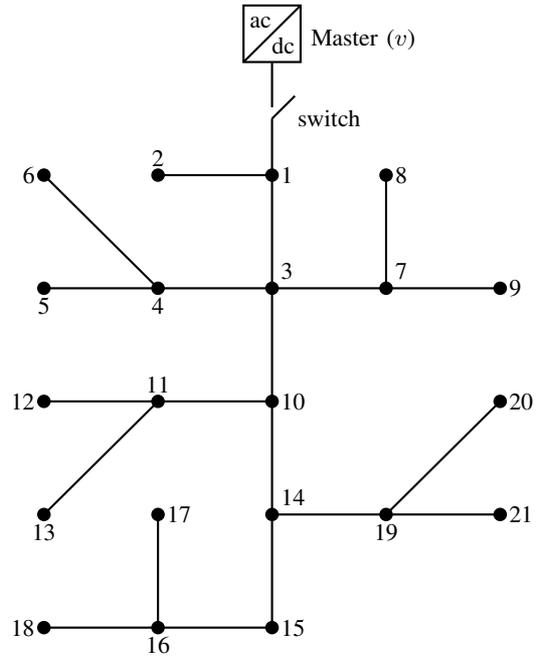

\begin{table}[tbh]
\centering
\caption{Parameters of the proposed test system}
\label{tab:dcgrid}
\begin{tabular}{|r|r|r|r|r|}
\hline
From & To & $r(pu)$ & $P(pu)$ & $1/C_{droop}$ \\
\hline\hline
	1  &  2 & 0.0053 & -0.70 &  0.05 \\
    1  &  3 & 0.0054 &  0.00 &  0.00 \\
    3  &  4 & 0.0054 & -0.36 &  0.08 \\
    4  &  5 & 0.0063 & -0.04 &  0.06 \\
    4  &  6 & 0.0051 &  0.36 &  0.07 \\
    3  &  7 & 0.0037 &  0.00 &  0.00 \\
    7  &  8 & 0.0079 & -0.32 &  0.08 \\
    7  &  9 & 0.0072 &  0.80 &  0.07 \\
    3  & 10 & 0.0053 &  0.00 &  0.00 \\
    10 & 11 & 0.0038 & -0.45 &  0.06 \\
    11 & 12 & 0.0079 & -0.68 &  0.08 \\
    11 & 13 & 0.0078 &  0.10 &  0.05 \\
    10 & 14 & 0.0083 &  0.00 &  0.00 \\
    14 & 15 & 0.0065 &  0.22 &  0.06 \\
    15 & 16 & 0.0064 & -0.23 &  0.05 \\
    16 & 17 & 0.0074 &  0.43 &  0.06 \\
    16 & 18 & 0.0081 & -0.34 &  0.08 \\
    14 & 19 & 0.0078 &  0.09 &  0.09 \\
    19 & 20 & 0.0084 &  0.21 &  0.07 \\
    19 & 21 & 0.0082 &  0.21 &  0.07 \\
\hline		
\end{tabular}
\end{table}

The value of $\left\|F(v_k)\right\|$ is given in Fig \ref{fig:convergencia} for each iteration.    We can see that the first iteration for the Newton's method and for the approximated Newton's method is the same.   However, as the algorithm is executed, the error of the Newton's method is reduced quadratically while the error of the approximated Newton's method is reduced linearly.  This performance agrees with Propositions 1 to 4; in fact, these conclusions obtained by numeric simulations, can be obtained by direct use of these prepositions.  Tables \ref{tab:summary1} and \ref{tab:summary2} summarize these results.

\begin{table}[tb]
    \centering
    \caption{Summary of the convergence measures of Master-Slave Operation}
    \label{tab:summary1}
    \begin{tabular}{|c|c|c|}
    \hline
         Measure & Newton & Approximated \\
    \hline\hline    
    $\alpha$ & 0.8000  & 0.8000 \\ 
    $\rho$   & 0.2443  & 0.2443 \\   
    $\mu$    & 0.1231  & 0.1231 \\
    $\delta$ & 0.2803  & 0.7334 \\
    $h$      & 0.1827  &  -     \\
    $\beta$  &  -      & 0.6178 \\ 
    \hline 
                & quadratic  &  linear          \\
    Conclusion  & convergence & convergence  \\
    \hline
    \end{tabular}
\end{table}

\begin{table}[tb]
    \centering
    \caption{Summary of the convergence measures of Island Operation}
    \label{tab:summary2}
    \begin{tabular}{|c|c|c|}
    \hline
         Measure & Newton & Approximated \\
    \hline\hline    
    $\alpha$ & 19.9856  & 19.9856 \\ 
    $\gamma$ &  0.0679  & 0.0679 \\   
    $\xi$    &  0.6144  & 0.6144 \\
    $\delta$ &  0.0417  & -      \\
    $h$      &  0.0643  &  -     \\
    $\beta$  &  -       & 1.44 \\ 
    \hline 
                & quadratic  &  No guarantee      \\
    Conclusion  & convergence &   of convergence \\
    \hline
    \end{tabular}
\end{table}

It is important to notice, that conditions presented in this paper are sufficient but necessary.  It means that, if the conditions are satisfied we can guarantee convergence of the method.  However, if some condition is not satisfied it does not imply the algorithm will diverge.  This is the case of the Approximated Newton's method for island operation which, in our example, does not fullfil the conditions from Proposition 4 (see Table \ref{tab:summary2}), however, as we can see in Fig \ref{fig:convergencia} the algorithm achieves convergence.

Corollary 1 can also be used to find the maximum value of power in which we guarantee quadratic convergence for master slave operation. In this case, it is  $P_{max} = 1.2406$. This result is important in applications were the power flow is executed many times, for example in optimization problems \cite{dcloadflow}. 

\begin{figure}
\centering
\footnotesize
\begin{tikzpicture}
\begin{semilogyaxis}[width=8.5cm,height=7cm,grid=both,xlabel={iterations},ylabel={$\left\| F(v)\right\|$},ymode=log]
\addplot[black, mark=o, thick] coordinates {(0,0.796470017556558)  
                                     (1,0.000158828382884) 
                                     (2,0.000000000012215)
                                     (3,0.000000000000017)};
\addplot[black, mark=*] coordinates {(0,0.796470017556558)  
                                     (1,0.000158828382884) 
                                     (2,0.000000087443442)
                                     (3,0.000000000052993)
                                     (4,0.000000000000053)};
\addplot[black, mark=triangle, thick] coordinates {(0,0.877898964219528)  
                                     (1,0.004385437319859) 
                                     (2,0.000000320721296)
                                     (3,0.000000000000165)
                                     (4,0.000000000000010)};
\addplot[black, mark = square] coordinates {(0,0.877898964219528)  
                                     (1,0.004385437319859) 
                                     (2,0.000065337705128)
                                     (3,0.000001171717181)
                                     (4,0.000000020735150)
                                     (5,0.000000000366552)
                                     (6,0.000000000006484)
                                     (7,0.000000000000120)
                                     (8,0.000000000000012)};
\legend{Master-slave Newton, Master-slave Approx, Island Newton, Island Approx };                                     
\end{semilogyaxis}
\end{tikzpicture}
\caption{Error as function of the iterations}
\label{fig:convergencia}
\end{figure}
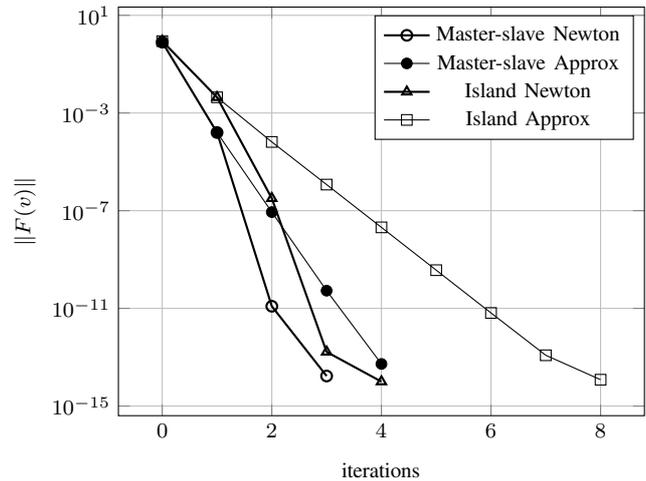

\section{Conclusions}
Exact conditions for the convergence of the the Newton's method and for the approximated newton's method were presented. The Kantorovitch's theorem and the contraction mapping theorem were used considering practical assumptions such as connectivity of the graph, per unit representation and stationary state operation. Numerical results complemented the analysis.  

The proposed analysis is important for a better understanding of the Newton method in dc grids, but also as a practical tool to determine convergence in problems in which the method is applied several times. This analysis could be extended to ac grids but more research is required.

\bibliographystyle{IEEEtran}
\bibliography{biblio}

\appendix
\subsection{Proof sketch for the Kantorovitch's theorem}

Let us define $A = I-DF(v_0)^{-1}DF(v_1)$, then replacing $I=DF(v_0)^{-1}DF(v_0)$ and by the Lipschitz condition of $DF(v)$ we have

\begin{equation*}
	\left\| A\right\| \leq \left\|DF(v_0)^{-1}\right\| K \left\| v_0-v_1\right\|
\end{equation*}

\noindent but $\left\|v_0-v_1\right\| = \left\| \Delta v_0 \right\| \leq \left\| DF(v_0)^{-1}\right\| \left\| F(v_0)\right\| $ and due to Condition (\ref{eq:condicion_h}) we have that $\left\| A \right\| \leq 1/2$.  Therefore, we can use the Banach Lemma which 
guarantees the existence of the inverse of $(I-A)$ and gives some boundaries as follows

\begin{equation*}
	 \left\| DF(v_1)^{-1}\right\| \leq \left\| DF(v_0)^{-1} \right\| \left\| (I-A)^{-1} \right\| \leq 2 \left\| DF(x_0)^{-1} \right\|
	\label{eq:restriccion_df}
\end{equation*}

On the other hand, let us define a function $g:\mathbb{R}\rightarrow \mathbb{R}^n$ as $g(t) = F(v+t\Delta v)$, then we have $g'(t) = [DF(v+t\Delta v)] \Delta t$ and hence

\begin{equation*}
	F(v+\Delta v) - F(v) = g(1)-g(0) = \int\limits_0^1 g'(t) dt
\end{equation*}

that is 
\begin{equation*}
F(v+\Delta v) - F(v) = DF(v)\Delta v + \int\limits_0^1 DF(v+t\Delta v)\Delta v - DF(v)\Delta v dt
\end{equation*}

by the Lipschitz condition of $DF$ we have

\begin{align*}
	\left\| F(v+\Delta v) - F(v) - DF(v)\Delta v\right\| &\leq \int_0^1 K \left\| v+t\Delta v - v\right\|\left\|\Delta v\right\| dt \\
	       &\leq \frac{K}{2} \left\| \Delta v\right\|^2
\end{align*}

since in each iteration $\Delta v_k = DF(v_k)^{-1} F(v_k)$ then

\begin{equation}
	\left\|F(v_{k+1})\right\| \leq \frac{K}{2} \left\| \Delta v_k\right\|^2
	\label{eq:restriccion_delta}
\end{equation}

Finally, let us analyze the step $\Delta v_1 = -DF(v_1)F(v_1)$ which by applying (\ref{eq:restriccion_df}), (\ref{eq:restriccion_delta}) and (\ref{eq:condicion_h}) we have

\begin{align*}
	\left\| \Delta v_1 \right\| &\leq \left\| DF(v_1)\right\|\left\| F(v_1) \right\|    \\
	                            &\leq (2\left\| DF(v_0)\right\|)\left(K/2\left\|\Delta v_0\right\|^2\right) \\
											        &\leq 1/2 \left\| \Delta v_0 \right\|											 
\end{align*}

By appling the same argument to the next iterations we can conclude there is a contraction of  $\Delta v$ and $F(v)$ as depicted in Fig \ref{fig:bola_iteracion}.  More details about this classic theorem can be found in \cite{hubbard}.

\subsection{Proof sketch for the Contraction mapping theorem}

The Contraction mapping theorem is general for any Banach space but we are interested only in $\mathbb{R}^n$. Let $T:\mathcal{B}\rightarrow\mathcal{B}$ be a contraction mapping in a closed ball $\mathcal{B}\in\mathbb{R}^n$, consider two point $u,v \in \mathcal{B}$ then

\begin{align*}
	\left\| u-v\right\| &= \left\| u-v + T(u)-T(v)-T(u)+T(v)\right\|  \\
	                    &\leq \left\| u-T(u)\right\| + \left\| v-T(v)\right\| + \left\|T(u)-T(v) \right\|  \\
											&\leq \left\| u-T(u)\right\| + \left\| v-T(v)\right\| + \alpha\left\|u-v \right\|  \\
\end{align*}

rearranging the inequation,

\begin{equation*}
	\left\| u-v\right\|\leq \frac{1}{1-\alpha} (\left\| u-T(u)\right\|+\left\| v-T(v)\right\|)
\end{equation*}

if $u=T(u)$ and $v=T(v)$ then $\left\| u-v\right\|\leq 0$. Since a norm is always positive except in zero, then necessarily $u=v$ which means that the fixed point is unique.  

Now define a sequence $\left\{v_k\right\}_0^{\infty}$ by the iteration $v_{k+1} = T(v_k)$.  Then it follows that

\begin{equation*}
 \left\|v_{k+n}-v_{k} \right\| \leq \left( \alpha^{k-1}\sum\limits_{m=0}^\infty \alpha^{m}\right) \left\|v_2-v_1 \right\| = \frac{\alpha^{k-1}}{1-\alpha} \left\| v_2-v_1\right\|
\end{equation*}

therefore $\left\{v_k\right\}_0^{\infty}$ is a Cauchy sequence.   Since $\mathbb{R}^n$ is complete then $\left\{v_k\right\}_0^{\infty}$ converges to a fixed $v\in\mathbb{R}^n$.  More details about this theorem can be found in \cite{sholomo}.




\end{document}